\documentclass[12pt, a4paper]{article}
\usepackage{amsmath, amsthm, amssymb}
\usepackage{mathrsfs}
\usepackage{bm}
\usepackage[utf8]{inputenc}
\usepackage[T1]{fontenc}
\usepackage{lmodern}
\usepackage{microtype}
\usepackage{geometry}
\usepackage{hyperref}
\hypersetup{
    colorlinks=true,
    linkcolor=blue,
    filecolor=magenta,      
    urlcolor=cyan}
\newtheorem{theorem}{Theorem}[section]

\newtheorem{corollary}[theorem]{Corollary}
\theoremstyle{definition}
\newtheorem{definition}[theorem]{Definition}

\numberwithin{equation}{section}

\newcommand{\B}{\mathbb{B}}
\newcommand{\E}{\mathbb{E}}

\title{On Apostol-Type Mersenne-Bernoulli and Mersenne-Euler Polynomials}

\author{%
  Artatrana Suna\textsuperscript{a},
  Prasanta Kumar Ray\textsuperscript{b}\textsuperscript{*}%
}

\date{}

\begin{document}

\maketitle

\vspace{-2.5em}

\begin{center}
  \textit{Department of Mathematics, Sambalpur University, India}
\end{center}

\begin{abstract}
In this paper, we introduce the Apostol-type Mersenne-Bernoulli and Mersenne-Euler polynomials of order $\alpha$. By employing the $M$-calculus, based on the Mersenne numbers, we establish explicit series representations, addition theorems, difference equations and convolution identities.
\end{abstract}

\noindent\textbf{Keywords:} Apostol-Mersenne-Bernoulli polynomials, Apostol-Mersenne-Euler polynomials, $M$-derivative, $M$-integral.

\noindent\textbf{2020 Mathematics Subject Classification:} 11B68, 11B83, 11B39.

\begingroup
  \renewcommand{\thefootnote}{\alph{footnote}}
  \footnotetext[1]{Email: \texttt{suna.19972702@gmail.com}}
  \footnotetext[2]{Email: \texttt{prasantamath@suniv.ac.in} (Corresponding author)}
\endgroup

\section{Introduction}
Special polynomials have long played a central role in various branches of mathematics, including number theory, combinatorics, approximation theory, and mathematical physics. Among these, the classical Bernoulli polynomials $\{B_n(x)\}$ and Euler polynomials $\{E_n(x)\}$ occupy important positions. These are defined respectively by their exponential generating functions \cite{abramowitz1974handbook}
\[
\frac{t e^{tx}}{e^t-1}=\sum_{n=0}^\infty B_n(x)\frac{t^n}{n!},\quad |t|<2\pi,
\]
\[
\frac{2e^{tx}}{e^t+1}=\sum_{n=0}^\infty E_n(x)\frac{t^n}{n!},\quad |t|<\pi.
\]
These polynomials exhibit numerous properties, such as addition theorems, recurrence relations and differential equations, and are applied in diverse fields including numerical analysis, asymptotic expansions, and the study of zeta functions.

A significant generalization was introduced by Apostol \cite{apostol1951lerch}, who defined the Apostol-Bernoulli polynomials by incorporating a parameter $\lambda$. Subsequently, Luo and Srivastava \cite{luo2005some} systematically studied the Apostol-Bernoulli and Apostol-Euler polynomials of order $\alpha$, establishing fundamental properties and connections with the Hurwitz-Lerch zeta function. Many interesting works on Apostol-type polynomials and their various generalizations can be found in \cite{luo2005some, luo2006some, ozarslan2011unified}.

The study of special polynomials via integral representations and generating functions \cite{Cesarano2015, Cesarano2019}, degenerate and hypergeometric versions  \cite{BedoyaCes2023, CesaranoRam2022, CesRam2023, CesQuint2024, QuintRam2024, RamCeserano2022}, and the monomiality principle for orthogonal and Hermite-type polynomials \cite{CesQuintRam2024, dattoliCes2001, dattoli2001, RamCes2024, RamAlej2024} have further enriched the theory of special polynomials.

Over the past few years, the study of sequence-based calculus has emerged as a dynamic research area. Krot \cite{krot2004introduction} introduced the $F$-calculus based on Fibonacci numbers, which was further developed as Golden calculus \cite{pashaev2012golden}. This framework has been extensively applied to study Fibonacci-based analogues of classical special polynomials. Kus et al. \cite{kus2019bernoulli} introduced Bernoulli $F$-polynomials and Fibo-Bernoulli matrices. The authors in \cite{gulal2023apostol} defined Apostol Bernoulli-Fibonacci and Apostol Euler-Fibonacci polynomials whereas investigation of parametric forms of these polynomials via Golden calculus was done in \cite{kizilates2023parametric}. Most recently, Di\c{s}kaya \cite{diskaya2025apostol} introduced Apostol-type polynomials based on Padovan sequences, showing that the methodology extends naturally to other recursive sequences. For further studies along these lines, we refer the reader to \cite{duran2025FHermite, mersin2025apostol, Ramirez2024matrix, urieles2024frobenius}. 

Building on this rich literature, we introduce $M$-calculus based on Mersenne numbers, which are of the form $M_n = 2^n-1,\quad n \geq 0$. These numbers play a crucial role in number theory, particularly in the study of perfect numbers and large primes. In this work, we define and study the Mersenne version of Apostol-type Bernoulli and Euler polynomials through the $M$-calculus.

\section{Preliminaries}

In this section, we introduce the basic notions of $M$-calculus, which are built upon the Mersenne numbers. While these concepts are similar to their classical analogs, they will be essential for our subsequent study.
\begin{definition}[$M$-factorial]
The $M$-factorial is defined recursively by $M_0! = 1$ and $M_n! = M_n M_{n-1}!$ for $n\ge 1$.
\end{definition}

\begin{definition}[$M$-binomial coefficients]
For $0\le k\le n$, the $M$-binomial coefficient is defined by
\[
\binom{n}{k}_M = \frac{M_n!}{M_k!\,M_{n-k}!},
\]
with $\binom{n}{k}_M=0$ for $k>n$. 
\end{definition}
These coefficients satisfy the symmetry $\binom{n}{k}_M = \binom{n}{n-k}_M$, the boundary conditions $\binom{n}{0}_M=1$, $\binom{n}{1}_M=M_n$, and the recurrence
\[
\binom{n}{k}_M = 2^k\binom{n-1}{k}_M + \binom{n-1}{k-1}_M.
\]
Hence, the $M$-binomial coefficients are always integers.
\begin{definition}[$M$-exponential function]
The $M$-exponential function is defined by the power series
\[
e_M^{xt} = \begin{cases}
\displaystyle \sum_{n=0}^\infty x^n \frac{t^n}{M_n!}, & \text{if } x \neq 0,\\[1ex]
1, & \text{otherwise}.
\end{cases}
\]
\end{definition}
Unlike the classical one, the $M$-exponential function is not multiplicative rather we have $e_M^{xt}e_M^{yt}=e_M^{(x+_M y)t}.$ Here $+_M$ denotes the $M$-binomial addition defined by the $M$-binomial theorem:
\[
(x+_M y)^n = \sum_{k=0}^n \binom{n}{k}_M x^k y^{n-k}.
\]
We can define the Mersenne version of derivative and integral operator by the following manner.
\begin{definition}[$M$-derivative]
The $M$-derivative operator $\mathcal{D}^x$ is defined by its action on real or complex valued functions
\[
\mathcal{D}^x(f(x)) = \frac{f(2x)-f(x)}{x},\qquad x\neq 0,
\]
with $\mathcal{D}^x(f(0)) = \lim_{x\to 0}\mathcal{D}^x(f(x))$ when the limit exists.
\end{definition}
\noindent For monomials, we have
\[
\mathcal{D}^x(x^n) = M_n x^{n-1},\qquad n\ge 1.
\]
The $M$-derivative is linear and satisfies the product rule
\[
\mathcal{D}^x(f(x)g(x)) = g(2x)\mathcal{D}^x(f(x)) + f(x)\mathcal{D}^x(g(x)).
\]

\begin{definition}[$M$-integral]
For a function $f(x)$ with $\mathcal{D}^xF(x) = f(x),$ its $M$-integral is defined as
\[
\int_a^b f(x)\,d_M x = F(b)-F(a).
\]
\end{definition}
\noindent As an example we see that
\[
\int_0^1 x^{n}\,d_M x = \frac{1}{M_{n+1}}
\]
for $n\geq 0.$
\noindent The linearity of the $M$-integral follows directly from the definition of the $M$-derivative. The integration by parts formula can be obtained as
\[
\int_{a}^{b} f(x)\mathcal{D}^{x} (g(x))\,d_M x = f(b)g(b)-f(a)g(a)-\int_{a}^{b} \mathcal{D}^{x} (f(x)) g(2x)\,d_M x,
\]
\[
\int_{a}^{b} g(2x)\mathcal{D}^{x} (f(x)) \,d_M x = f(b)g(b)-f(a)g(a)-\int_{a}^{b} f(x)\mathcal{D}^{x} (g(x))\,d_M x.
\]
The $M$-calculus introduced above provides a complete algebraic framework that is analogous to the classical calculus while being intrinsically linked to the Mersenne numbers. The $M$-factorial, $M$-binomial coefficients, $M$-exponential, $M$-derivative, and $M$-integral satisfy natural identities that will serve as the foundation for defining and studying Apostol-type Mersenne–Bernoulli and Mersenne–Euler polynomials.

\section{Apostol-Mersenne-Bernoulli and Apostol-Mersenne-Euler Polynomials}

In this section, we introduce the Apostol-type generalizations of the Mersenne-Bernoulli and Mersenne-Euler polynomials by incorporating the complex parameters $\lambda,\alpha,$ where $\alpha$ denotes the order. Further, we systematically study these polynomials, establishing their fundamental properties including explicit representations, addition theorems, difference equations, $M$-derivative formulas, and convolution identities. 

\begin{definition}
The Apostol-Mersenne-Bernoulli polynomials $\B_{n,M}^{(\alpha)}(x;\lambda)$ and Apostol-Mersenne-Euler polynomials $\E_{n,M}^{(\alpha)}(x;\lambda)$ of order $\alpha$ are defined by the following generating functions
\begin{equation}\label{eq:apostol-bm-def}
\left(\frac{t}{\lambda e_M^t-1}\right)^\alpha e_M^{xt} = \sum_{n=0}^\infty \B_{n,M}^{(\alpha)}(x;\lambda)\frac{t^n}{M_n!},
\end{equation}
\begin{equation}\label{eq:apostol-em-def}
\left(\frac{2}{\lambda e_M^t+1}\right)^\alpha e_M^{xt} = \sum_{n=0}^\infty \E_{n,M}^{(\alpha)}(x;\lambda)\frac{t^n}{M_n!}.
\end{equation}
The Apostol-Mersenne-Bernoulli numbers are $\B_{n,M}^{(\alpha)}(\lambda)=\B_{n,M}^{(\alpha)}(0;\lambda)$, whereas the Apostol-Mersenne-Euler numbers are $\E_{n,M}^{(\alpha)}(\lambda)=\E_{n,M}^{(\alpha)}(0;\lambda)$. 
\end{definition}

The following theorem gives the explicit representation of the Apostol-type polynomials in terms of the corresponding numbers.

\begin{theorem}\label{thm:explicit}
For $n\ge 0$, the Apostol-type Mersenne-Bernoulli and Mersenne-Euler polynomials satisfy
\begin{equation}\label{eq:apostol-bm-explicit}
\B_{n,M}^{(\alpha)}(x;\lambda) = \sum_{k=0}^n \binom{n}{k}_M \B_{k,M}^{(\alpha)}(\lambda) x^{n-k},
\end{equation}
\begin{equation}\label{eq:apostol-em-explicit}
\E_{n,M}^{(\alpha)}(x;\lambda) = \sum_{k=0}^n \binom{n}{k}_M \E_{k,M}^{(\alpha)}(\lambda) x^{n-k}.
\end{equation}
\end{theorem}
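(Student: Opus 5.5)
The plan is to compare coefficients in the generating functions \eqref{eq:apostol-bm-def} and \eqref{eq:apostol-em-def}, after factoring each one as the generating function of the numbers times $e_M^{xt}$. For the Bernoulli case, setting $x=0$ in \eqref{eq:apostol-bm-def} and using $e_M^{0\cdot t}=1$ gives
\[
\left(\frac{t}{\lambda e_M^t-1}\right)^\alpha=\sum_{k=0}^\infty \B_{k,M}^{(\alpha)}(\lambda)\frac{t^k}{M_k!}.
\]
The left-hand side of \eqref{eq:apostol-bm-def} is this series multiplied by $e_M^{xt}=\sum_{j\ge 0}x^j t^j/M_j!$. The case $x=0$ is trivial, since both sides of \eqref{eq:apostol-bm-explicit} then reduce to $\B_{n,M}^{(\alpha)}(\lambda)$ (only the $k=n$ term survives, with the convention $0^0=1$).

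Next I take the Cauchy product of the two power series in $t$:
\[
\sum_{n=0}^\infty\left(\sum_{k=0}^n \frac{\B_{k,M}^{(\alpha)}(\lambda)\,x^{n-k}}{M_k!\,M_{n-k}!}\right)t^n .
\]
I then multiply and divide the inner sum by $M_n!$ and use the definition of the $M$-binomial coefficient, $M_n!/(M_k!M_{n-k}!)=\binom{n}{k}_M$. This rewrites the product as $\sum_n \bigl(\sum_k \binom{n}{k}_M \B_{k,M}^{(\alpha)}(\lambda)x^{n-k}\bigr)t^n/M_n!$. Comparing the coefficients of $t^n/M_n!$ with the right-hand side of \eqref{eq:apostol-bm-def} yields \eqref{eq:apostol-bm-explicit}. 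Since every $M_n\neq 0$ for $n\ge1$ and $M_0!=1$, the normalization is legitimate.

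The Euler identity \eqref{eq:apostol-em-explicit} follows by exactly the same argument, with $\left(2/(\lambda e_M^t+1)\right)^\alpha$ in place of the Bernoulli factor.

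The only point needing care is justification rather than computation. The manipulations can be read as identities of formal power series in $t$, where Cauchy products and coefficient comparison are automatic. Alternatively, one works analytically in a neighbourhood of $t=0$ where $\lambda e_M^t\mp1$ is nonvanishing, or (for Bernoulli with $\lambda=1$) where $t/(e_M^t-1)$ extends holomorphically, so that $(\cdot)^\alpha$ has a convergent expansion. There, absolute convergence of both series justifies the product and uniqueness of power series coefficients. I would state this convention briefly and then present the coefficient comparison.
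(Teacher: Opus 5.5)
Your proposal is correct and is essentially the paper's proof: you factor the generating function as the generating function of the numbers times $e_M^{xt}$, form the Cauchy product, rewrite $M_n!/(M_k!\,M_{n-k}!)$ as $\binom{n}{k}_M$, compare coefficients of $t^n/M_n!$, and handle the Euler case identically. One minor caveat on your justification remark: in the Bernoulli case with $\lambda\neq 1$, $t/(\lambda e_M^t-1)$ vanishes at $t=0$, so its $\alpha$-th power is a power series in $t$ only for nonnegative integer $\alpha$; this is a well-definedness issue with the definition itself, which the paper shares, and it does not affect your coefficient comparison.
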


\begin{proof}
From the generating function \eqref{eq:apostol-bm-def}, we have
\begin{align*}
\sum_{n=0}^\infty \B_{n,M}^{(\alpha)}(x;\lambda)\frac{t^n}{M_n!} &= \left(\frac{t}{\lambda e_M^t-1}\right)^\alpha e_M^{xt} \\
&= \left(\sum_{k=0}^\infty \B_{k,M}^{(\alpha)}(\lambda)\frac{t^k}{M_k!}\right)\left(\sum_{m=0}^\infty x^m \frac{t^m}{M_m!}\right) \\
&= \sum_{n=0}^\infty \left(\sum_{k=0}^n \frac{\B_{k,M}^{(\alpha)}(\lambda) x^{n-k}}{M_k! M_{n-k}!}\right) M_n! \frac{t^n}{M_n!} \\
&= \sum_{n=0}^\infty \left(\sum_{k=0}^n \binom{n}{k}_M \B_{k,M}^{(\alpha)}(\lambda) x^{n-k}\right) \frac{t^n}{M_n!}.
\end{align*}
Comparing coefficients yields \eqref{eq:apostol-bm-explicit}. The proof for \eqref{eq:apostol-em-explicit} is analogous.
\end{proof}

The explicit representation shows that $\mathbb{B}_{n,M}^{(\alpha)}(x;\lambda)$ and   $\mathbb{E}_{n,M}^{(\alpha)}(x;\lambda)$ are monic polynomials of degree $n.$ The following theorem provides the addition formulas for the Apostol-type polynomials.

\begin{theorem}[Addition formula]\label{thm:addition}
For any $x$ and $y$
\begin{equation}\label{eq:apostol-bm-addition}
\B_{n,M}^{(\alpha)}(x+_M y;\lambda) = \sum_{k=0}^n \binom{n}{k}_M \B_{k,M}^{(\alpha)}(x;\lambda) y^{n-k},
\end{equation}
\begin{equation}\label{eq:apostol-em-addition}
\E_{n,M}^{(\alpha)}(x+_M y;\lambda) = \sum_{k=0}^n \binom{n}{k}_M \E_{k,M}^{(\alpha)}(x;\lambda) y^{n-k}.
\end{equation}
\end{theorem}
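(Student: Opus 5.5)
We argue exactly as in the proof of Theorem~\ref{thm:explicit}, using the generating function \eqref{eq:apostol-bm-def} together with the multiplicativity rule $e_M^{xt}e_M^{yt}=e_M^{(x+_M y)t}$ recorded in the preliminaries. The first step is to interpret $\B_{n,M}^{(\alpha)}(x+_M y;\lambda)$ consistently with the umbral convention. By Theorem~\ref{thm:explicit}, $\B_{n,M}^{(\alpha)}(\cdot;\lambda)$ is a polynomial in which $x^{n-k}$ appears. Substituting $x+_M y$ means replacing each power $x^{m}$ by $(x+_M y)^m=\sum_{j}\binom{m}{j}_M x^j y^{m-j}$. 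With this reading, the generating function of $\B_{n,M}^{(\alpha)}(x+_M y;\lambda)$ is
\[
\left(\frac{t}{\lambda e_M^t-1}\right)^\alpha e_M^{(x+_M y)t}.
\]
This holds because $\sum_m (x+_M y)^m t^m/M_m! = e_M^{xt}e_M^{yt}$, and the latter is just the Cauchy product identity for the $M$-binomial coefficients.

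Next we factor this generating function as
\[
\left[\left(\frac{t}{\lambda e_M^t-1}\right)^\alpha e_M^{xt}\right] e_M^{yt}
=\left(\sum_{k=0}^\infty \B_{k,M}^{(\alpha)}(x;\lambda)\frac{t^k}{M_k!}\right)\left(\sum_{m=0}^\infty y^m\frac{t^m}{M_m!}\right).
\]
We then take the Cauchy product. Multiplying and dividing by $M_n!$ turns $\frac{1}{M_k!\,M_{n-k}!}$ into $\binom{n}{k}_M\frac{1}{M_n!}$. Comparing coefficients of $t^n/M_n!$ then gives \eqref{eq:apostol-bm-addition}. The Euler case \eqref{eq:apostol-em-addition} is identical, with the prefactor $\left(\frac{2}{\lambda e_M^t+1}\right)^\alpha$ replacing the Bernoulli one.

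The only genuinely delicate point is the first step, namely justifying that $\sum_n \B_{n,M}^{(\alpha)}(x+_M y;\lambda)t^n/M_n!$ equals the generating function with $e_M^{(x+_M y)t}$. The operation $+_M$ is defined only through powers, so it is not a genuine numerical sum. To handle this, we expand $\B_{n,M}^{(\alpha)}(x+_M y;\lambda)$ via Theorem~\ref{thm:explicit} as $\sum_k\binom{n}{k}_M\B_{k,M}^{(\alpha)}(\lambda)(x+_M y)^{n-k}$. Forming the generating function then factors as the number generating function times $\sum_m (x+_M y)^m t^m/M_m!$, and the latter is $e_M^{xt}e_M^{yt}$ by the $M$-binomial theorem. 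As an alternative that avoids generating functions altogether, one can verify the identity directly. Substitute Theorem~\ref{thm:explicit} on both sides and reduce to the $M$-analogue of the trinomial identity
\[
\binom{n}{k}_M\binom{k}{j}_M=\binom{n}{j}_M\binom{n-j}{k-j}_M,
\]
which is immediate from the factorial definition.
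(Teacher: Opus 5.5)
Your proof is correct and follows the same route as the paper. You rewrite the generating function as $\bigl[\left(\frac{t}{\lambda e_M^t-1}\right)^\alpha e_M^{xt}\bigr]e_M^{yt}$ via $e_M^{xt}e_M^{yt}=e_M^{(x+_M y)t}$, take the Cauchy product, and compare coefficients of $t^n/M_n!$. Your added justification (through Theorem~\ref{thm:explicit} and the $M$-binomial theorem) that $\sum_n \B_{n,M}^{(\alpha)}(x+_M y;\lambda)t^n/M_n!$ equals the generating function with $e_M^{(x+_M y)t}$ spells out the umbral reading of $x+_M y$ that the paper uses tacitly, and your alternative check via $\binom{n}{k}_M\binom{k}{j}_M=\binom{n}{j}_M\binom{n-j}{k-j}_M$ is also valid.
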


\begin{proof}
Using the generating function \eqref{eq:apostol-bm-def} and the addition property of the $M$-exponential,
\begin{align*}
\sum_{n=0}^\infty \B_{n,M}^{(\alpha)}(x+_M y;\lambda)\frac{t^n}{M_n!} &= \left(\frac{t}{\lambda e_M^t-1}\right)^\alpha e_M^{(x+_M y)t} \\
&= \left(\frac{t}{\lambda e_M^t-1}\right)^\alpha e_M^{xt} e_M^{yt} \\
&= \left(\sum_{k=0}^\infty \B_{k,M}^{(\alpha)}(x;\lambda)\frac{t^k}{M_k!}\right)\left(\sum_{m=0}^\infty y^m\frac{t^m}{M_m!}\right) \\
&= \sum_{n=0}^\infty \left(\sum_{k=0}^n \binom{n}{k}_M \B_{k,M}^{(\alpha)}(x;\lambda) y^{n-k}\right)\frac{t^n}{M_n!}.
\end{align*}
Comparing coefficients yields \eqref{eq:apostol-bm-addition}. We can establish \eqref{eq:apostol-em-addition} similarly.
\end{proof}

The next theorem is a difference equation which dictates the relationship between polynomials of order $\alpha$ and $\alpha-1$.

\begin{theorem}[Difference equation]\label{thm:difference}
For $n\ge 1$, the Apostol-type polynomials satisfy
\begin{equation}\label{eq:apostol-bm-difference}
\lambda \B_{n,M}^{(\alpha)}(x+_M1;\lambda) - \B_{n,M}^{(\alpha)}(x;\lambda) = M_n \B_{n-1,M}^{(\alpha-1)}(x;\lambda),
\end{equation}
\begin{equation}\label{eq:apostol-em-difference}
\lambda \E_{n,M}^{(\alpha)}(x+_M1;\lambda) + \E_{n,M}^{(\alpha)}(x;\lambda) = 2 \E_{n,M}^{(\alpha-1)}(x;\lambda).
\end{equation}
\end{theorem}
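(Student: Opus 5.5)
We prove \eqref{eq:apostol-bm-difference} by working directly with the generating function \eqref{eq:apostol-bm-def}, in the same way as Theorem~\ref{thm:addition}. The key input is the multiplicativity $e_M^{(x+_M 1)t}=e_M^{xt}e_M^{t}$, the special case $y=1$ of $e_M^{xt}e_M^{yt}=e_M^{(x+_M y)t}$. With it,
\[
\sum_{n=0}^\infty \Bigl(\lambda \B_{n,M}^{(\alpha)}(x+_M1;\lambda) - \B_{n,M}^{(\alpha)}(x;\lambda)\Bigr)\frac{t^n}{M_n!}
=\left(\frac{t}{\lambda e_M^t-1}\right)^\alpha e_M^{xt}\bigl(\lambda e_M^t-1\bigr).
\]
One factor of $\lambda e_M^t-1$ cancels, leaving
\[
t\left(\frac{t}{\lambda e_M^t-1}\right)^{\alpha-1}e_M^{xt}=\sum_{m=0}^\infty \B_{m,M}^{(\alpha-1)}(x;\lambda)\frac{t^{m+1}}{M_m!}.
\]

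Next we re-index with $n=m+1$ and write $\frac{1}{M_{n-1}!}=\frac{M_n}{M_n!}$, which follows from the recursion $M_n!=M_nM_{n-1}!$. The right-hand side then becomes $\sum_{n\ge1} M_n\B_{n-1,M}^{(\alpha-1)}(x;\lambda)\frac{t^n}{M_n!}$. Comparing coefficients of $t^n/M_n!$ for $n\ge1$ gives \eqref{eq:apostol-bm-difference}. As a consistency check, the $n=0$ coefficient on the left is $0$, since there is no constant term on the right.

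The Euler identity \eqref{eq:apostol-em-difference} follows the same pattern. Multiplying \eqref{eq:apostol-em-def} by $\lambda e_M^t+1$ gives
\[
\left(\frac{2}{\lambda e_M^t+1}\right)^\alpha e_M^{xt}(\lambda e_M^t+1)=2\left(\frac{2}{\lambda e_M^t+1}\right)^{\alpha-1}e_M^{xt},
\]
and comparing coefficients yields the identity for all $n\ge0$.

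The only delicate point is the use of $e_M^{xt}e_M^{t}=e_M^{(x+_M1)t}$. Here $x+_M1$ must be read symbolically: $\B_{n,M}^{(\alpha)}(x+_M1;\lambda)$ means the polynomial obtained by expanding $(x+_M 1)^j$ via the $M$-binomial theorem. Equivalently, by Theorem~\ref{thm:addition} with $y=1$, it equals $\sum_k\binom{n}{k}_M\B_{k,M}^{(\alpha)}(x;\lambda)$. Interpreted this way, the identity is exactly the product of the two power series in $t$, so the step is justified by the addition formula already established. Everything else is routine coefficient comparison of formal power series, and the powers of non-integer $\alpha$ are understood formally.
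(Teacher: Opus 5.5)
Your proof is correct and is essentially the paper's argument: both use $e_M^{(x+_M1)t}=e_M^{xt}e_M^{t}$ to turn the left-hand side into a generating-function identity, then compare coefficients of $t^n/M_n!$. The only cosmetic difference is that the paper rewrites $\frac{te_M^t}{\lambda e_M^t-1}=\frac{1}{\lambda}\bigl(t+\frac{t}{\lambda e_M^t-1}\bigr)$ (and uses the analogous identity in the Euler case), whereas you multiply through by $\lambda e_M^t\mp1$ directly, so your version never divides by $\lambda$.
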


\begin{proof}
We first prove \eqref{eq:apostol-bm-difference}. Consider the identity
\begin{align*}
\left(\frac{t}{\lambda e_M^t-1}\right)^\alpha e_M^{(x+_M1)t} &= \left(\frac{t}{\lambda e_M^t-1}\right)^{\alpha-1} e_M^{xt} \frac{t e_M^t}{\lambda e_M^t-1}.
\end{align*}
Now note that
\begin{align*}
\frac{t e_M^t}{\lambda e_M^t-1} &= \frac{1}{\lambda}\left(t + \frac{t}{\lambda e_M^t-1}\right).
\end{align*}
Therefore,
\begin{align*}
\sum_{n=0}^\infty \lambda\B_{n,M}^{(\alpha)}(x+_M1;\lambda)\frac{t^n}{M_n!} &= t\left(\frac{t}{\lambda e_M^t-1}\right)^{\alpha-1} e_M^{xt} + \left(\frac{t}{\lambda e_M^t-1}\right)^{\alpha} e_M^{xt} \\
&= \sum_{n=1}^\infty M_n \B_{n-1,M}^{(\alpha-1)}(x;\lambda)\frac{t^n}{M_n!} + \sum_{n=0}^\infty \B_{n,M}^{(\alpha)}(x;\lambda)\frac{t^n}{M_n!}.
\end{align*}
Now, extracting the terms for $n \geq 1$ and equating the corresponding coefficients yield the required result for the Bernoulli case. The proof for \eqref{eq:apostol-em-difference} follows similarly using the identity $\frac{2e_M^t}{\lambda e_M^t+1} = \frac{2}{\lambda} - \frac{2}{\lambda(\lambda e_M^t+1)}$.
\end{proof}
\noindent Taking $\alpha = 1$ results in the following corollary.
\begin{corollary} \label{th:numcol}
If $n \geq 0$, then
\[
\lambda \B_{n+1,M}(x+_M1;\lambda) - \B_{n+1,M}(x;\lambda) = M_{n+1} x^{n},
\]
\[
\lambda \E_{n,M}(x+_M1;\lambda) + \E_{n,M}(x;\lambda) = 2 x^n.
\]
\end{corollary}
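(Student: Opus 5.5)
The plan is to specialize Theorem~\ref{thm:difference} to $\alpha=1$ and then identify the order-zero polynomials explicitly from the generating functions. We write $\B_{n,M}(x;\lambda)=\B_{n,M}^{(1)}(x;\lambda)$ and $\E_{n,M}(x;\lambda)=\E_{n,M}^{(1)}(x;\lambda)$. So the first step is to determine $\B_{n,M}^{(0)}(x;\lambda)$ and $\E_{n,M}^{(0)}(x;\lambda)$.

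Putting $\alpha=0$ in \eqref{eq:apostol-bm-def} and \eqref{eq:apostol-em-def}, the prefactors $\left(\frac{t}{\lambda e_M^t-1}\right)^0$ and $\left(\frac{2}{\lambda e_M^t+1}\right)^0$ both equal $1$. Hence both generating functions reduce to $e_M^{xt}=\sum_{n\ge0}x^n\frac{t^n}{M_n!}$. Comparing coefficients gives
\[
\B_{n,M}^{(0)}(x;\lambda)=\E_{n,M}^{(0)}(x;\lambda)=x^n .
\]
For $x=0$ the convention $e_M^{0\cdot t}=1$ agrees with this, provided we read $0^0=1$.

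For the Bernoulli identity, apply \eqref{eq:apostol-bm-difference} with $\alpha=1$ and with $n$ replaced by $n+1\ge1$. This gives
\[
\lambda\B_{n+1,M}(x+_M1;\lambda)-\B_{n+1,M}(x;\lambda)=M_{n+1}\B_{n,M}^{(0)}(x;\lambda)=M_{n+1}x^n .
\]

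For the Euler identity, apply \eqref{eq:apostol-em-difference} with $\alpha=1$, which gives $2\E_{n,M}^{(0)}(x;\lambda)=2x^n$. The one point needing care is that Theorem~\ref{thm:difference} is stated only for $n\ge1$, while the corollary claims the Euler identity also at $n=0$. The Euler derivation does not lose the $n=0$ term, because the identity $\frac{2e_M^t}{\lambda e_M^t+1}=\frac{2}{\lambda}-\frac{2}{\lambda(\lambda e_M^t+1)}$ gives a relation between full power series with no shift in $t$. Still, I would check $n=0$ directly to be safe. Setting $t=0$ in \eqref{eq:apostol-em-def} with $\alpha=1$ gives $\E_{0,M}(x;\lambda)=\frac{2}{\lambda+1}$, which is independent of $x$. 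Therefore
\[
\lambda\E_{0,M}(x+_M1;\lambda)+\E_{0,M}(x;\lambda)=\frac{2\lambda}{\lambda+1}+\frac{2}{\lambda+1}=2=2x^0 .
\]

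The main obstacle is not computational but bookkeeping: matching the index ranges and the $n=0$ case of the Euler identity, and reading $0^0=1$ when $x=0$. Both are handled by the direct checks above.
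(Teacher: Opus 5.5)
Your proposal is correct and follows the paper's route. The paper proves the corollary simply by setting $\alpha=1$ in Theorem~\ref{thm:difference}, implicitly using $\B_{n,M}^{(0)}(x;\lambda)=\E_{n,M}^{(0)}(x;\lambda)=x^n$, which you make explicit; your direct check of the $n=0$ Euler case, which the stated range $n\ge 1$ does not formally cover though the generating-function proof does, is extra care rather than a different argument.
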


The following theorem provides recurrence relations for the Apostol-type Mersenne numbers.

\begin{theorem}\label{thm:number-recurrence}
For $n\ge 1$, the Apostol-type Mersenne-Bernoulli numbers satisfy
\begin{equation}\label{eq:apostol-bm-number-recurrence}
\lambda \sum_{k=0}^n \binom{n+1}{k}_M \B_{k,M}^{(\alpha)}(\lambda) = M_{n+1} \B_{n,M}^{(\alpha-1)}(\lambda) + \B_{n+1,M}^{(\alpha)}(\lambda),
\end{equation}
whereas the Apostol-type Mersenne-Euler numbers satisfy
\begin{equation}\label{eq:apostol-em-number-recurrence}
\lambda \sum_{k=0}^n \binom{n}{k}_M \E_{k,M}^{(\alpha)}(\lambda) + \E_{n,M}^{(\alpha)}(\lambda) = 2 \E_{n,M}^{(\alpha-1)}(\lambda).
\end{equation}
\end{theorem}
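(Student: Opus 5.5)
The plan is to specialize the difference equations of Theorem~\ref{thm:difference} at $x=0$, and then expand the shifted argument $0+_M 1$ with the addition formula of Theorem~\ref{thm:addition}. We take $x=0$ and $y=1$ there, so that $(0+_M1)^m=\binom{m}{m}_M=1$ and
\[
\B_{m,M}^{(\alpha)}(0+_M1;\lambda)=\sum_{k=0}^{m}\binom{m}{k}_M \B_{k,M}^{(\alpha)}(\lambda),\qquad
\E_{m,M}^{(\alpha)}(0+_M1;\lambda)=\sum_{k=0}^{m}\binom{m}{k}_M \E_{k,M}^{(\alpha)}(\lambda).
\]
Equivalently, this follows from Theorem~\ref{thm:explicit} evaluated at $x=1$, since every power $1^{m-k}$ equals $1$.

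\textbf{Euler identity.} Apply \eqref{eq:apostol-em-difference} with $x=0$:
\[
\lambda\E_{n,M}^{(\alpha)}(0+_M1;\lambda)+\E_{n,M}^{(\alpha)}(\lambda)=2\E_{n,M}^{(\alpha-1)}(\lambda).
\]
Substituting the expansion above with $m=n$ gives \eqref{eq:apostol-em-number-recurrence} immediately. This part contains no subtlety.

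\textbf{Bernoulli identity.} Apply \eqref{eq:apostol-bm-difference} with $n$ replaced by $n+1$ and $x=0$:
\[
\lambda\B_{n+1,M}^{(\alpha)}(0+_M1;\lambda)-\B_{n+1,M}^{(\alpha)}(\lambda)=M_{n+1}\B_{n,M}^{(\alpha-1)}(\lambda).
\]
Next, expand $\B_{n+1,M}^{(\alpha)}(0+_M1;\lambda)$ as $\sum_{k=0}^{n+1}\binom{n+1}{k}_M\B_{k,M}^{(\alpha)}(\lambda)$. Split off the top term $k=n+1$, whose coefficient is $\binom{n+1}{n+1}_M=1$. The terms $k=0,\dots,n$ then give exactly the left-hand side $\lambda\sum_{k=0}^{n}\binom{n+1}{k}_M\B_{k,M}^{(\alpha)}(\lambda)$ of \eqref{eq:apostol-bm-number-recurrence}. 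Moving $\B_{n+1,M}^{(\alpha)}(\lambda)$ to the right produces the terms $M_{n+1}\B_{n,M}^{(\alpha-1)}(\lambda)+\B_{n+1,M}^{(\alpha)}(\lambda)$ as stated.

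\textbf{Main obstacle.} The main obstacle is the bookkeeping of the split-off top term. The term $\lambda\B_{n+1,M}^{(\alpha)}(\lambda)$ from $k=n+1$ must also be accounted for. Done carefully, the derivation gives
\[
\lambda\sum_{k=0}^{n}\binom{n+1}{k}_M\B_{k,M}^{(\alpha)}(\lambda)=M_{n+1}\B_{n,M}^{(\alpha-1)}(\lambda)+(1-\lambda)\B_{n+1,M}^{(\alpha)}(\lambda).
\]
This coincides with \eqref{eq:apostol-bm-number-recurrence} exactly when $\lambda\B_{n+1,M}^{(\alpha)}(\lambda)=0$. So I would present the derivation this way and check the displayed form against the generating function. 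As a sanity check, take $\alpha=1$, $n=1$ and $\lambda\neq1$, so that $\B_{0,M}=0$ and $\B_{1,M}=1/(\lambda-1)$. I expect this to show that the stated Bernoulli relation is to be read with the $k=n+1$ term included in the sum, or equivalently with the factor $(1-\lambda)$ in front of $\B_{n+1,M}^{(\alpha)}(\lambda)$. The Euler relation holds exactly as written.
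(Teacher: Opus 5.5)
Your argument is the paper's proof: set $x=0$ in Theorem~\ref{thm:difference}, expand the shifted argument $0+_M1$ with the addition formula at $x=0$, $y=1$, and replace $n$ by $n+1$ in the Bernoulli case. The discrepancy you flag is real and lies in the printed statement, not in your reasoning. The paper's own step ``replace $n$ by $n+1$'' produces $\lambda\sum_{k=0}^{n+1}\binom{n+1}{k}_M\B_{k,M}^{(\alpha)}(\lambda)$, so the upper limit $n$ is a typo. Your proposed check confirms this: for $\alpha=1$, $n=1$, using $\B_{2,M}(\lambda)=-3\lambda/(\lambda-1)^2$, the printed identity reads $3\lambda/(\lambda-1)=-3\lambda/(\lambda-1)^2$, which fails unless $\lambda=0$, while your corrected form balances. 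The Euler identity holds as stated.
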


\begin{proof}
Setting $x=0$ in Theorem \ref{thm:difference} and using the addition formula with $x=0$, $y=1$ gives
\begin{align*}
\lambda \sum_{k=0}^n \binom{n}{k}_M \B_{k,M}^{(\alpha)}(\lambda) - \B_{n,M}^{(\alpha)}(\lambda) = M_n \B_{n-1,M}^{(\alpha-1)}(\lambda).
\end{align*}
Replacing $n$ by $n+1$ yields \eqref{eq:apostol-bm-number-recurrence}. The Euler case follows similarly from \eqref{eq:apostol-em-difference}.
\end{proof}

The next set of results establishes the relationship between these polynomials through an addition formula.
\begin{theorem}\label{thm:mixed-bernoulli}
For $n\geq 0$, the following mixed addition formula holds
\[
\B_{n,M}^{(\alpha)}(x+_M y;\lambda)
= \sum_{k=0}^n \binom{n}{k}_M
\Bigl[\B_{k,M}^{(\alpha)}(y;\lambda) + \frac{M_k}{2}\,\B_{k-1,M}^{(\alpha-1)}(y;\lambda)\Bigr]
\E_{n-k,M}(x;\lambda),
\]
where $\B_{-1,M}^{(\alpha-1)}(y;\lambda)=0$ and $\E_{n-k,M}(x;\lambda)=\E_{n-k,M}^{(1)}(x;\lambda)$.
\end{theorem}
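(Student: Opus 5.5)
The plan is a generating-function argument of the same kind as in Theorems~\ref{thm:addition} and~\ref{thm:difference}. The idea is to insert into the Bernoulli generating function a factor equal to $1$, written as $\frac{\lambda e_M^t+1}{2}\cdot\frac{2}{\lambda e_M^t+1}$. The second factor supplies the Apostol--Mersenne--Euler generating function of order $1$; the first factor is absorbed into the Bernoulli part.

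\emph{Step 1: factor the generating function.} Starting from \eqref{eq:apostol-bm-def} and the relation $e_M^{(x+_M y)t}=e_M^{xt}e_M^{yt}$, I would write
\[
\left(\frac{t}{\lambda e_M^t-1}\right)^\alpha e_M^{(x+_M y)t}
=\left[\frac{\lambda e_M^t+1}{2}\left(\frac{t}{\lambda e_M^t-1}\right)^\alpha e_M^{yt}\right]\left[\frac{2}{\lambda e_M^t+1}\,e_M^{xt}\right].
\]
By \eqref{eq:apostol-em-def} with $\alpha=1$, the second bracket equals $\sum_{m\ge0}\E_{m,M}(x;\lambda)\,t^m/M_m!$.

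\emph{Step 2: simplify the first bracket.} Use the splitting $\lambda e_M^t+1=(\lambda e_M^t-1)+2$, in the same spirit as the identity in the proof of Theorem~\ref{thm:difference}. This gives
\[
\frac{\lambda e_M^t+1}{2}\left(\frac{t}{\lambda e_M^t-1}\right)^\alpha
=\left(\frac{t}{\lambda e_M^t-1}\right)^\alpha+\frac{t}{2}\left(\frac{t}{\lambda e_M^t-1}\right)^{\alpha-1}.
\]
After multiplying by $e_M^{yt}$, the first term generates $\B_{k,M}^{(\alpha)}(y;\lambda)$. In the second term, the shift $t\cdot t^{k-1}/M_{k-1}!=M_k\,t^k/M_k!$ produces coefficients $\frac{M_k}{2}\B_{k-1,M}^{(\alpha-1)}(y;\lambda)$ for $k\ge1$. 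The $k=0$ term vanishes, consistent with the convention $\B_{-1,M}^{(\alpha-1)}=0$. So the first bracket equals
\[
\sum_{k\ge0}\Bigl[\B_{k,M}^{(\alpha)}(y;\lambda)+\tfrac{M_k}{2}\B_{k-1,M}^{(\alpha-1)}(y;\lambda)\Bigr]\frac{t^k}{M_k!}.
\]

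\emph{Step 3: take the Cauchy product and compare coefficients.} Multiplying the two series and using $\frac{1}{M_k!\,M_{n-k}!}=\binom{n}{k}_M\frac{1}{M_n!}$, exactly as in the proof of Theorem~\ref{thm:explicit}, gives the stated sum as the coefficient of $t^n/M_n!$. Comparing with the left-hand side, which is $\sum_n \B_{n,M}^{(\alpha)}(x+_My;\lambda)t^n/M_n!$, finishes the proof.

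The only step needing care is Step~2, specifically the index shift producing the factor $M_k$ and the boundary term at $k=0$. Everything else is formal manipulation of power series, valid in a neighbourhood of $t=0$ where the generating functions are analytic.
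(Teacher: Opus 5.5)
Your proof is correct. It reaches the result by a genuinely different route from the paper: you work with generating functions, whereas the paper manipulates coefficients.

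The paper's steps are:
\begin{itemize}
\item It combines Corollary~\ref{th:numcol} with the addition formula to expand each monomial in the Apostol--Mersenne--Euler basis, $x^{n-k}=\tfrac12\bigl(\E_{n-k,M}(x;\lambda)+\lambda\sum_{j}\binom{n-k}{j}_M\E_{j,M}(x;\lambda)\bigr)$.
\item It substitutes this into the addition formula, tacitly with the roles of $x$ and $y$ swapped, which uses the symmetry of $+_M$.
\item It reorders the double sum with $\binom{n}{k}_M\binom{n-k}{j}_M=\binom{n}{j}_M\binom{n-j}{k}_M$ to reach \eqref{eq:bm-em1}, whose coefficients are $\tfrac12\bigl(\B_{k,M}^{(\alpha)}(y;\lambda)+\lambda\B_{k,M}^{(\alpha)}(y+_M1;\lambda)\bigr)$.
\item Only then does it invoke the difference equation \eqref{eq:apostol-bm-difference}.
\end{itemize}

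Those intermediate coefficients are exactly the coefficients of your first bracket $\frac{\lambda e_M^t+1}{2}\bigl(\frac{t}{\lambda e_M^t-1}\bigr)^{\alpha}e_M^{yt}$. The paper's final appeal to the difference equation is the coefficient-level counterpart of your splitting $\lambda e_M^t+1=(\lambda e_M^t-1)+2$, which is how that difference equation was proved in the first place. So the two arguments encode the same identity.

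Your version is shorter and self-contained. It needs neither the commutativity of $+_M$ nor the $M$-binomial interchange identity. It also avoids the paper's slip of writing the inner sum as $\B_{n-j,M}^{(\alpha)}(x+_M1;\lambda)$ where it should be $\B_{n-j,M}^{(\alpha)}(y+_M1;\lambda)$.

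The paper's version reuses the earlier results directly. It also makes explicit that the theorem is a change of basis, from monomials to Apostol--Mersenne--Euler polynomials, inside the addition formula.
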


\begin{proof}
By Corollary \ref{th:numcol}, we have 
\[
x^{n-k}= \frac{1}{2}\left(\E_{n-k,M}(x;\lambda)+\lambda\sum_{j=0}^{n-k}\binom{n-k}{j}_M\E_{j,M}(x;\lambda)\right)
\]
for $n \geq k \geq 0$. Inserting this into \eqref{eq:apostol-bm-addition} results in
\begin{align*}
\B_{n,M}^{(\alpha)}(x+_M y;\lambda)=
\frac{1}{2}&\sum_{k=0}^n\binom{n}{k}_M\B_{k,M}^{(\alpha)}(y;\lambda)\E_{n-k,M}(x;\lambda)\\
&+\frac{\lambda}{2}\sum_{k=0}^n\binom{n}{k}_M\B_{k,M}^{(\alpha)}(y;\lambda)
\sum_{j=0}^{n-k}\binom{n-k}{j}_M\E_{j,M}(x;\lambda).
\end{align*}
In the second double sum, interchange the order of summation using the $M$-binomial identity
\[
\binom{n}{k}_M\binom{n-k}{j}_M = \binom{n}{j}_M\binom{n-j}{k}_M
\]
to obtain
\begin{align*}
\B_{n,M}^{(\alpha)}(x+_M y;\lambda)=
\frac{1}{2}&\sum_{k=0}^n\binom{n}{k}_M\B_{k,M}^{(\alpha)}(y;\lambda)\E_{n-k,M}(x;\lambda)\\
&+\frac{\lambda}{2}\sum_{j=0}^n\binom{n}{j}_M\E_{j,M}(x;\lambda)
\sum_{k=0}^{n-j}\binom{n-j}{k}_M\B_{k,M}^{(\alpha)}(y;\lambda).
\end{align*}
The inner sum in the second sum is $\B_{n-j,M}^{(\alpha)}(x+_M1;\lambda)$. Thus,
\begin{equation}\label{eq:bm-em1}
\B_{n,M}^{(\alpha)}(x+_M y;\lambda)=
\frac{1}{2}\sum_{k=0}^n\binom{n}{k}_M\left(\B_{k,M}^{(\alpha)}(y;\lambda)+\lambda\B_{k,M}^{(\alpha)}(y+_M1;\lambda)\right)\E_{n-k,M}(x;\lambda).
\end{equation}
Now, by \eqref{eq:apostol-bm-difference} we have
\[
\B_{k,M}^{(\alpha)}(y;\lambda)+\lambda\B_{k,M}^{(\alpha)}(y+_M1;\lambda) = 2\B_{k,M}^{(\alpha)}(y;\lambda)+M_k\B_{k-1,M}^{(\alpha-1)}(y;\lambda).
\]
Finally, substitution of the above value in \eqref{eq:bm-em1} results in the desired expression.
\end{proof}

\begin{theorem}\label{thm:mixed-euler}
If $n \geq 0,$ then
\[
\E_{n,M}^{(\alpha)}(x+_M y;\lambda)
= \sum_{k=0}^n \frac{2}{M_{k+1}}\binom{n}{k}_M
\Bigl[\E_{k+1,M}^{(\alpha-1)}(y;\lambda) - \E_{k+1,M}^{(\alpha)}(y;\lambda)\Bigr]
\B_{n-k,M}(x;\lambda),
\]
where $\B_{n-k,M}(x;\lambda)=\B_{n-k,M}^{(1)}(x;\lambda)$ and $\E_{k+1,M}^{(\alpha-1)}(y;\lambda)=0$ when $\alpha=0.$
\end{theorem}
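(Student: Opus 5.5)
The plan is to argue at the level of generating functions rather than by inserting an expansion of $x^{n-k}$ as in the proof of Theorem~\ref{thm:mixed-bernoulli}, since that gives the cleanest bookkeeping. By the addition property $e_M^{(x+_M y)t}=e_M^{xt}e_M^{yt}$, the left-hand side has generating function
\[
\left(\frac{2}{\lambda e_M^t+1}\right)^\alpha e_M^{yt}\,e_M^{xt}.
\]
I will multiply and divide by $\frac{t}{\lambda e_M^t-1}$, so that $e_M^{xt}$ is replaced by $\frac{1}{t}(\lambda e_M^t-1)\cdot \frac{t}{\lambda e_M^t-1}e_M^{xt}$. The second factor is $\sum_{j}\B_{j,M}(x;\lambda)t^j/M_j!$ by \eqref{eq:apostol-bm-def} with $\alpha=1$.

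The key algebraic step is the identity $\lambda e_M^t-1=(\lambda e_M^t+1)-2$, which gives
\[
(\lambda e_M^t-1)\left(\frac{2}{\lambda e_M^t+1}\right)^\alpha=2\left(\frac{2}{\lambda e_M^t+1}\right)^{\alpha-1}-2\left(\frac{2}{\lambda e_M^t+1}\right)^{\alpha}.
\]
Multiplying by $e_M^{yt}$ and using \eqref{eq:apostol-em-def}, the generating function becomes
\[
\frac{2}{t}\sum_{m\ge0}\Bigl[\E_{m,M}^{(\alpha-1)}(y;\lambda)-\E_{m,M}^{(\alpha)}(y;\lambda)\Bigr]\frac{t^m}{M_m!}\;\sum_{j\ge0}\B_{j,M}(x;\lambda)\frac{t^j}{M_j!}.
\]
When $\alpha=0$, the convention $\E^{(\alpha-1)}=0$ stated in the theorem is what this formal computation should be read against.

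Next I extract the coefficient of $t^n$, i.e. the pairs $m+j=n+1$. I set $m=k+1$ and $j=n-k$, and use
\[
\frac{M_n!}{M_{k+1}!\,M_{n-k}!}=\frac{1}{M_{k+1}}\binom{n}{k}_M,
\]
which follows from $M_{k+1}!=M_{k+1}M_k!$. This produces exactly the summand $\frac{2}{M_{k+1}}\binom{n}{k}_M[\E_{k+1,M}^{(\alpha-1)}-\E_{k+1,M}^{(\alpha)}](y;\lambda)\,\B_{n-k,M}(x;\lambda)$ for $0\le k\le n$. I also have to check that the $t^{-1}$ coefficient, $2[\E_{0,M}^{(\alpha-1)}-\E_{0,M}^{(\alpha)}]\B_{0,M}(x;\lambda)$, vanishes. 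This holds because either $\lambda=1$, so both $\E_0$ terms equal $1$, or $\lambda\neq1$, so $\B_{0,M}(x;\lambda)=0$.

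The main obstacle is the leftover boundary term $m=0$, $j=n+1$, i.e. $k=-1$:
\[
\frac{2}{M_0!}\Bigl[\E_{0,M}^{(\alpha-1)}(\lambda)-\E_{0,M}^{(\alpha)}(\lambda)\Bigr]\frac{M_n!}{M_{n+1}!}\B_{n+1,M}(x;\lambda),
\]
where $\E_{0,M}^{(\beta)}(\lambda)=(2/(\lambda+1))^{\beta}$. For $\lambda=1$ this bracket is zero and the theorem follows as stated. For $\lambda\ne1$ the bracket equals $(2/(\lambda+1))^{\alpha-1}(\lambda-1)/(\lambda+1)\neq0$, and I do not yet see how it cancels. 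My first step will be to test small $n$ (say $n=0$) numerically to decide whether the identity needs this extra term or the restriction $\lambda=1$. If it does, I will record the correction explicitly rather than force the stated form.
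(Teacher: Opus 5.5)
Your argument is the paper's own argument. The paper sets $G_\beta(t)=\bigl(\frac{2}{\lambda e_M^t+1}\bigr)^{\beta}e_M^{yt}$, uses $G_{\alpha-1}-G_\alpha=G_\alpha\cdot\frac{\lambda e_M^t-1}{2}$ (your identity $\lambda e_M^t-1=(\lambda e_M^t+1)-2$), and multiplies $H_\alpha(t)=\frac{2}{t}\bigl(G_{\alpha-1}(t)-G_\alpha(t)\bigr)$ by the order-one Bernoulli generating function. Your coefficient extraction is right, and the $k=-1$ term you could not cancel is genuine: it does not cancel.

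The paper avoids this term only by expanding $H_\alpha(t)$ as a power series starting at $t^0$. That tacitly requires $G_{\alpha-1}(0)=G_\alpha(0)$, i.e.\ $\lambda=1$. For $\lambda\neq1$ it silently discards $\frac{2}{t}\bigl(\frac{2}{\lambda+1}\bigr)^{\alpha-1}\frac{\lambda-1}{\lambda+1}$.

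The $n=0$ test you propose settles the question immediately. The left side is $\E_{0,M}^{(\alpha)}(x+_My;\lambda)=\bigl(\frac{2}{\lambda+1}\bigr)^{\alpha}\neq0$. The right side is a multiple of $\B_{0,M}(x;\lambda)$, which vanishes for $\lambda\neq1$; for example, $\lambda=2$, $\alpha=1$ gives $2/3$ versus $0$. So the statement holds only for $\lambda=1$. What your computation actually proves, for every admissible $\lambda$, is
\[
\E_{n,M}^{(\alpha)}(x+_M y;\lambda)=\frac{2}{M_{n+1}}\sum_{m=0}^{n+1}\binom{n+1}{m}_M\Bigl[\E_{m,M}^{(\alpha-1)}(y;\lambda)-\E_{m,M}^{(\alpha)}(y;\lambda)\Bigr]\B_{n+1-m,M}(x;\lambda).
\]
Using $\frac{1}{M_{n+1}}\binom{n+1}{k+1}_M=\frac{1}{M_{k+1}}\binom{n}{k}_M$, this is the stated sum plus your $k=-1$ term $\frac{2}{M_{n+1}}\bigl[\E_{0,M}^{(\alpha-1)}(\lambda)-\E_{0,M}^{(\alpha)}(\lambda)\bigr]\B_{n+1,M}(x;\lambda)$. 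As a check, for $n=1$, $\alpha=1$, $x=y=0$, both sides equal $-2\lambda/(\lambda+1)^2$. Record this corrected identity as the result, with the stated form as the special case $\lambda=1$, rather than searching for a cancellation.

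Your remark on $\alpha=0$ also needs to be made precise, and it points the opposite way. Your computation is valid for every $\alpha$, including $\alpha=0$, provided $\E_{m,M}^{(-1)}(y;\lambda)$ means the coefficients of $\frac{\lambda e_M^t+1}{2}\,e_M^{yt}$. That is exactly what the defining generating function gives with exponent $-1$. The theorem's convention $\E_{k+1,M}^{(\alpha-1)}(y;\lambda)=0$ for $\alpha=0$ contradicts the identity even at $\lambda=1$: for $n=0$ it would assert $1=-2y$. So the convention must be discarded, not accommodated.
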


\begin{proof}
Consider the generating function for the Apostol-Mersenne–Euler polynomials of order $\alpha$ in variable $y$
\[
G_\alpha(t) = \left(\frac{2}{\lambda e_M^t+1}\right)^\alpha e_M^{yt}
= \sum_{n=0}^\infty \E_{n,M}^{(\alpha)}(y;\lambda) \frac{t^n}{M_n!}. 
\]
We note that
\[
G_{\alpha-1}(t) = G_{\alpha}(t)\left(\frac{\lambda e_M^{t}+1}{2}\right).
\]
Thus,
\begin{equation}\label{eq:em_g}
G_{\alpha-1}(t)-G_{\alpha}(t) = G_{\alpha}(t)\left(\frac{\lambda e_M^{t}-1}{2}\right).
\end{equation}
Let us define the following series
\begin{align*}
H_{\alpha}(t) &= \frac{2}{t}\bigl( G_{\alpha-1}(t) - G_\alpha(t) \bigr)\\
&= \sum_{k=0}^\infty \frac{2}{M_{k+1}}\bigl( \E_{k+1,M}^{(\alpha-1)}(y;\lambda) - \E_{k+1,M}^{(\alpha)}(y;\lambda) \bigr) \frac{t^k}{M_k!}.
\end{align*}
Now, multiply $H_{\alpha}(t)$ with the generating function of the Apostol–Mersenne–Bernoulli polynomials with $\alpha = 1$ and using \eqref{eq:em_g} gives
\begin{align*}
\sum_{n=0}^{\infty}\sum_{k=0}^{\infty}\binom{n}{k}_M \frac{2}{M_{k+1}}&\left(\E_{k+1,M}^{(\alpha-1)}(y;\lambda) - \E_{k+1,M}^{(\alpha)}(y;\lambda)\right) \frac{t^n}{M_n!} \\
&= G_{\alpha}(t)\left(\frac{\lambda e_M^{t}-1}{2t}\right)\left(\frac{2t}{\lambda e_M^{t}-1}\right)e_M^{xt} \\
&= \sum_{n=0}^{\infty} \E_{n,M}^{(\alpha)}(x+_M y;\lambda) \frac{t^n}{M_n!}.
\end{align*}
Comparing the corresponding coefficients yields the desired identity.
\end{proof}
Theorems \ref{thm:mixed-bernoulli} and \ref{thm:mixed-euler} provide mixed addition formulas that connect the Apostol–Bernoulli–Mersenne and Apostol–Euler–Mersenne polynomials of different orders.
The following theorem shows that the $M$-derivative acts as a lowering operator for the Apostol-type polynomials. This will be used to define the $M$-integral and to compute weighted integrals.

\begin{theorem}\label{thm:m-derivative}
For $n\ge 1$,
\begin{equation}\label{eq:apostol-bm-derivative}
\mathcal{D}^x\left(\B_{n,M}^{(\alpha)}(x;\lambda)\right) = M_n \B_{n-1,M}^{(\alpha)}(x;\lambda),
\end{equation}
\begin{equation}\label{eq:apostol-em-derivative}
\mathcal{D}^x\left(\E_{n,M}^{(\alpha)}(x;\lambda)\right) = M_n \E_{n-1,M}^{(\alpha)}(x;\lambda).
\end{equation}
\end{theorem}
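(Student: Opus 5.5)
The plan is to apply $\mathcal{D}^x$ term by term to the explicit representation of Theorem \ref{thm:explicit}. By \eqref{eq:apostol-bm-explicit}, $\B_{n,M}^{(\alpha)}(x;\lambda)$ is a polynomial in $x$ whose coefficients $\B_{k,M}^{(\alpha)}(\lambda)$ do not depend on $x$. Since $\mathcal{D}^x$ is linear and kills constants (indeed $(c-c)/x=0$), we can use the monomial rule $\mathcal{D}^x(x^m)=M_m x^{m-1}$ for $m\ge 1$. Together with $M_0=0$, this rule covers $m=0$ as well. We obtain
\[
\mathcal{D}^x\left(\B_{n,M}^{(\alpha)}(x;\lambda)\right)=\sum_{k=0}^{n-1}\binom{n}{k}_M M_{n-k}\,\B_{k,M}^{(\alpha)}(\lambda)\,x^{n-k-1}.
\]
The term $k=n$ drops out.

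The key algebraic step is the $M$-binomial identity $\binom{n}{k}_M M_{n-k}=M_n\binom{n-1}{k}_M$ for $0\le k\le n-1$. It follows directly from the definition:
\[
\frac{M_n!}{M_k!\,M_{n-k}!}\,M_{n-k}=\frac{M_n\,M_{n-1}!}{M_k!\,M_{n-1-k}!},
\]
using $M_{n-k}!=M_{n-k}M_{n-k-1}!$ and $M_n!=M_nM_{n-1}!$. Substituting this identity and factoring out $M_n$ leaves $M_n\sum_{k=0}^{n-1}\binom{n-1}{k}_M\B_{k,M}^{(\alpha)}(\lambda)x^{n-1-k}$. By \eqref{eq:apostol-bm-explicit} with $n$ replaced by $n-1$, this sum is exactly $\B_{n-1,M}^{(\alpha)}(x;\lambda)$, which proves \eqref{eq:apostol-bm-derivative}. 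The Euler case \eqref{eq:apostol-em-derivative} follows verbatim from \eqref{eq:apostol-em-explicit}.

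As a cross-check, we can also apply $\mathcal{D}^x$ to the generating function. We have $\mathcal{D}^x e_M^{xt}=t\,e_M^{xt}$, since $\sum_n M_n x^{n-1}t^n/M_n!=t\sum_n x^{n-1}t^{n-1}/M_{n-1}!$. Comparing coefficients of $t^n/M_n!$ then gives the same result, provided we justify applying $\mathcal{D}^x$ termwise. The polynomial route avoids that issue.

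The only point needing care is the point $x=0$, where $\mathcal{D}^x$ is defined as a limit. For polynomials, $(f(2x)-f(x))/x$ is itself a polynomial after cancellation, so the limit exists and the identity extends continuously to $x=0$. The factorial identity is the main (mild) step.
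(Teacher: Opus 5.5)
Your proof is correct, but it takes a different route from the paper's. The paper does not use Theorem~\ref{thm:explicit}. It applies $\mathcal{D}^x$ directly to the generating function \eqref{eq:apostol-bm-def} and pulls out the $x$-independent factor $\left(t/(\lambda e_M^t-1)\right)^\alpha$. It then uses $\mathcal{D}^x e_M^{xt}=t\,e_M^{xt}$ and reindexes via $t\cdot t^{n-1}/M_{n-1}!=M_n\,t^n/M_n!$ before comparing coefficients. That is exactly your ``cross-check''.

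Your main argument instead works with the finite sum \eqref{eq:apostol-bm-explicit}, differentiating monomial by monomial. You absorb the factor $M_{n-k}$ through $\binom{n}{k}_M M_{n-k}=M_n\binom{n-1}{k}_M$, which is the coefficient-level counterpart of the paper's reindexing step.

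What each route buys:
\begin{itemize}
\item Your route is entirely finite and handles the point $x=0$ explicitly, which the paper passes over. The price is that it depends on Theorem~\ref{thm:explicit} and needs an extra $M$-binomial identity.
\item The paper's route is shorter and self-contained.
\end{itemize}

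The termwise-differentiation worry you raise about the generating-function route is not a real obstacle. $\mathcal{D}^x f(x)=(f(2x)-f(x))/x$ is just a combination of two point evaluations, so for $x\neq 0$ it passes through any convergent series in $t$, and it acts coefficientwise on formal power series. The paper's one-line argument is therefore already rigorous.
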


\begin{proof}
Apply $\mathcal{D}^x$ to the generating function \eqref{eq:apostol-bm-def} to obtain
\begin{align*}
\mathcal{D}^x\left(\sum_{n=0}^\infty \B_{n,M}^{(\alpha)}(x;\lambda)\frac{t^n}{M_n!}\right) &= \left(\frac{t}{\lambda e_M^t-1}\right)^\alpha \mathcal{D}^x\left(e_M^{xt}\right) \\
&= \left(\frac{t}{\lambda e_M^t-1}\right)^\alpha t e_M^{xt} \\
&= t\sum_{n=0}^\infty \B_{n,M}^{(\alpha)}(x;\lambda)\frac{t^n}{M_n!} \\
&= \sum_{n=0}^\infty \B_{n,M}^{(\alpha)}(x;\lambda)\frac{t^{n+1}}{M_n!} \\
&= \sum_{n=1}^\infty M_n \B_{n-1,M}^{(\alpha)}(x;\lambda)\frac{t^n}{M_n!}.
\end{align*}
Comparing coefficients yields \eqref{eq:apostol-bm-derivative}. The proof for \eqref{eq:apostol-em-derivative} is analogous.
\end{proof}

As an application of the $M$-derivative we find the following convolution-like results.
\begin{theorem}
Let $b_n = \B_n^{(1)}(\lambda)$, then
\[
\sum_{k=0}^{n}\binom{n}{k}_M 2^k b_k b_{n-k}  = -2\bigl(M_{n-1}b_n+2^{n-2}M_nb_{n-1}\bigr).
\]
\end{theorem}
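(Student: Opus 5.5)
The plan is to read the left-hand side as a coefficient in a product of generating functions, and then find a closed algebraic identity for that product in terms of $f(t)$ and $f(2t)$ alone. Put $f(t)=\frac{t}{\lambda e_M^t-1}=\sum_{n\ge0} b_n\frac{t^n}{M_n!}$. Then $f(2t)=\sum_{k\ge0}2^k b_k\frac{t^k}{M_k!}$. Exactly as in the proof of Theorem~\ref{thm:explicit}, the Cauchy product gives
\[
f(2t)f(t)=\sum_{n\ge0}\Bigl(\sum_{k=0}^n\binom{n}{k}_M 2^k b_k b_{n-k}\Bigr)\frac{t^n}{M_n!}.
\]
So the left side of the statement is the $n$-th coefficient of $f(2t)f(t)$.

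The key ingredient is the functional equation of the $M$-exponential. Applying $\mathcal{D}^t$ to $e_M^t$ gives $\frac{e_M^{2t}-e_M^t}{t}=e_M^t$, that is, $e_M^{2t}=(1+t)e_M^t$; this is the case $x=1$ of the computation $\mathcal{D}^x e_M^{xt}=te_M^{xt}$ used in Theorem~\ref{thm:m-derivative}. Write $A=\lambda e_M^t-1$ and $B=\lambda e_M^{2t}-1$. The functional equation gives $B=(1+t)\lambda e_M^t-1=(1+t)A+t$. Since $f(t)=t/A$, we get $f(t)+1+t=\frac{t+(1+t)A}{A}=\frac{B}{A}$. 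Multiplying by $f(2t)=2t/B$ yields the identity
\[
f(2t)\bigl(f(t)+1+t\bigr)=2f(t),\qquad\text{i.e.}\qquad f(2t)f(t)=2f(t)-f(2t)-t\,f(2t).
\]

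Next I extract the $n$-th coefficient (with respect to $t^n/M_n!$) of each term on the right:
\begin{itemize}
\item $2f(t)$ contributes $2b_n$;
\item $f(2t)$ contributes $2^n b_n$;
\item $t f(2t)=\sum_k 2^k b_k\frac{t^{k+1}}{M_k!}$ contributes $M_n2^{n-1}b_{n-1}$, for $n\ge1$.
\end{itemize}
Hence the $n$-th coefficient is $(2-2^n)b_n-2^{n-1}M_nb_{n-1}$. Since $2^n-2=2M_{n-1}$, this equals $-2\bigl(M_{n-1}b_n+2^{n-2}M_nb_{n-1}\bigr)$, which is the claimed value.

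The main obstacle is finding the closed form for $f(2t)f(t)$; once the relation $e_M^{2t}=(1+t)e_M^t$ is in hand, it reduces to the short partial-fraction manipulation above. I also need to check the edge case $n=0$. There $b_{-1}$ should be read as $0$, the term $tf(2t)$ contributes nothing, and $M_{-1}=-\tfrac12$ if the formula $M_n=2^n-1$ is extended. With these conventions both sides equal $b_0^2=b_0$; this uses $b_0\in\{0,1\}$, which is tacit in the statement (for $\lambda\neq1$, $b_0=0$). Otherwise I would state the result for $n\ge1$.
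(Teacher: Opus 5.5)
Your proof is correct and is essentially the paper's argument. The paper also identifies the left side as the $n$-th coefficient of $f(2t)f(t)$ and proves $f(2t)f(t)=f(2t)-tf(2t)-2t\,\mathcal{D}^t\bigl(f(t)\bigr)$, which equals your $2f(t)-f(2t)-tf(2t)$ because $t\,\mathcal{D}^t\bigl(f(t)\bigr)=f(2t)-f(t)$. The only difference is that the paper reaches this identity by applying the $M$-product rule to $f(t)(\lambda e_M^t-1)=t$, whereas you use $e_M^{2t}=(1+t)e_M^t$ directly.

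Your treatment of the constant term (it is $b_0$, with $b_0^2=b_0$) is more careful than the paper's. The paper writes that term as $1$, which is accurate only when $\lambda=1$.
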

\begin{proof}
Let
\[
f(t)=\frac{t}{\lambda e_M^t-1}=\sum_{n=0}^\infty b_n\frac{t^n}{M_n!}
\]
which gives
\[
f(t)\bigl(\lambda e_M^t-1\bigr)=t .
\]
Applying the $M$-derivative to both sides we get
\begin{equation}\label{eq:deriv1}
\bigl(\lambda e_M^{2t}-1\bigr)\mathcal D^t\bigl(f(t)\bigr)+\lambda f(t)e_M^t=1.
\end{equation}
We also have $\lambda e_M^t = 1+\dfrac{t}{f(t)}$ and 
$\lambda e_M^{2t}-1 = \dfrac{2t}{f(2t)}.$ Substituting this into \eqref{eq:deriv1} gives
\[
\frac{2t}{f(2t)}\,\mathcal D^t\bigl(f(t)\bigr)+f(t)+t=1,
\]
hence
\begin{equation}\label{eq:deriv2}
\mathcal D^t\bigl(f(t)\bigr)=\frac{f(2t)}{2t}\bigl(1-t-f(t)\bigr).
\end{equation}
From this,
\[
f(2t)f(t) = f(2t)-tf(2t)-2t\mathcal{D}^t(f(t)).
\]
Now, we have 
\[
f(2t)f(t) = \sum_{n=0}^{\infty}\left(\sum_{k=0}^{n}\binom{n}{k}_M 2^k b_k b_{n-k}\right)\frac{t^n}{M_n!}
\]
and
\begin{align*}
f(2t)-tf(2t)-2t\mathcal{D}^t(f(t)) &= 1+\sum_{n=1}^{\infty}\left(2^nb_n-2^{n-1}M_{n}b_{n-1}-2M_nb_n\right)\frac{t^n}{M_n!}\\
&= 1 + \sum_{n=1}^{\infty}-2\left(M_{n-1}b_n+2^{n-2}M_nb_{n-1}\right)\frac{t^n}{M_n!}.
\end{align*}
Comparing the corresponding coefficients gives the required relation.
\end{proof}

\begin{theorem}
If $e_n = \E_n^{(1)}(\lambda)$, then
\[
\sum_{k=0}^{n}\binom{n}{k}_M2^ke_ke_{n-k} = 2\bigl(2^ne_n+M_{n+1}e_{n-1}\bigr).
\]
\end{theorem}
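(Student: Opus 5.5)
The plan is to mirror the proof of the preceding Bernoulli convolution, i.e.\ to derive a functional $M$-differential equation for the generating function of $e_n$ and compare coefficients. Set
\[
g(t)=\frac{2}{\lambda e_M^t+1}=\sum_{n=0}^\infty e_n\frac{t^n}{M_n!},
\qquad\text{so that}\qquad g(t)\bigl(\lambda e_M^t+1\bigr)=2 .
\]
Apply $\mathcal D^t$ to both sides with the product rule of Section~2, taking $g$ as the factor that gets evaluated at $2t$'s partner. Since $\mathcal D^t(e_M^t)=e_M^t$, this gives
\[
\bigl(\lambda e_M^{2t}+1\bigr)\mathcal D^t\bigl(g(t)\bigr)+\lambda g(t)e_M^t=0 .
\]
Now eliminate the exponentials using $\lambda e_M^{2t}+1=2/g(2t)$ and $\lambda e_M^{t}=2/g(t)-1$. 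This turns the relation into
\[
\frac{2}{g(2t)}\mathcal D^t\bigl(g(t)\bigr)+2-g(t)=0,
\]
that is,
\[
g(2t)g(t)=2g(2t)+2\,\mathcal D^t\bigl(g(t)\bigr).
\]

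Next expand both sides in the basis $t^n/M_n!$. Exactly as in the Bernoulli case, the left side is
\[
\sum_n\Bigl(\sum_k\binom nk_M2^ke_ke_{n-k}\Bigr)\frac{t^n}{M_n!},
\]
which is the left side of the theorem. The term $2g(2t)$ contributes $2\cdot2^ne_n$, accounting for the first part of the stated right side. By the same index shift used in Theorem~\ref{thm:m-derivative}, the term $2\mathcal D^t g$ has coefficient $2e_{n+1}$. The final step is to reconcile this with the stated term $2M_{n+1}e_{n-1}$, and this is where I expect the real obstacle.

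My first check would be low orders. At $n=0$ the statement reads $e_0^2=2e_0$ (with $e_{-1}=0$), while $e_0=2/(\lambda+1)$. So the statement as worded holds at $n=0$ only when $\lambda=0$. In that degenerate case $g\equiv 2$ and $e_n=0$ for $n\ge1$, and the whole identity is trivially true. For general $\lambda$, the coefficient comparison instead forces the derivative term to be $2e_{n+1}$.

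I would therefore carry out the argument in the following order. First derive the functional equation above. Then compare coefficients to obtain the convolution with right-hand side $2(2^ne_n+\text{[derivative term]})$. Finally, test whether any rewriting can convert $e_{n+1}$ into $M_{n+1}e_{n-1}$. The tools for that last test are the number recurrence of Theorem~\ref{thm:number-recurrence} with $\alpha=1$, where $\E^{(0)}_{n,M}=\delta_{n0}$, and the lowering property. I expect this last step to fail except when $\lambda=0$, and the $n=0$ test above already indicates this. In that case the honest outcome is that the stated form can be established only in that degenerate case, and the proof delivers the identity with $e_{n+1}$ in general.
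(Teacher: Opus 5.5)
Your functional equation $g(t)g(2t)=2g(2t)+2\mathcal{D}^t(g(t))$ is exactly the one the paper derives, and your coefficient extraction is correct. Since $\mathcal{D}^t\bigl(t^n/M_n!\bigr)=t^{n-1}/M_{n-1}!$, one has $\mathcal{D}^t(g(t))=\sum_{n\ge0}e_{n+1}t^n/M_n!$. The shift in Theorem~\ref{thm:m-derivative} that you cite is actually the opposite one: it comes from multiplication by $t$. Your result $e_{n+1}$ is nevertheless right.

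The paper's proof breaks at exactly this point. It expands $\mathcal{D}^t(g(t))$ as $\sum_{n\ge1}M_ne_{n-1}t^n/M_n!$, which is the series of $t\,g(t)$, and then also changes $M_n$ into $M_{n+1}$. So the printed proof is invalid. What the argument actually gives is
\[
\sum_{k=0}^{n}\binom{n}{k}_M2^ke_ke_{n-k}=2\bigl(2^ne_n+e_{n+1}\bigr).
\]
You can check this at $n=0$ with $e_0=2/(\lambda+1)$ and $e_1=-2\lambda/(\lambda+1)^2$: both sides equal $4/(\lambda+1)^2$.

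\textbf{The error in your proposal.} The printed identity is \emph{not} true for $\lambda=0$. There $e_0=2$ and $e_n=0$ for $n\ge1$, so at $n=1$:
\begin{itemize}
\item the left side is $3e_0e_1=0$;
\item the right side is $2\bigl(2e_1+M_2e_0\bigr)=12$.
\end{itemize}
Your $n=0$ test (with $e_{-1}=0$) already forces $\lambda=0$. Combined with this, the stated identity holds for no admissible $\lambda$, so there is no degenerate case to salvage.

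Drop the last step of your plan, the attempt to turn $e_{n+1}$ into $M_{n+1}e_{n-1}$ via Theorem~\ref{thm:number-recurrence}. No such rewriting can exist: already at $n=0$ it would require $e_1=0$. The correct write-up is your derivation of the corrected identity above, together with the two explicit counterexamples ($n=0$ for $\lambda\neq0$, and $n=1$ for $\lambda=0$) to the statement as printed.
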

\begin{proof}
Define
\[
g(t)=\frac{2}{\lambda e_M^t+1}=\sum_{n=0}^\infty e_n\frac{t^n}{M_n!}.
\]
By applying the $M$-derivative we get
\[
(\lambda e_M^{2t}+1)\mathcal D^t(g(t)) + g(t)\,\mathcal D^t(\lambda e_M^t+1)=0.
\]
Since $\lambda e_M^{2t}+1=\dfrac{2}{g(2t)}$, $\mathcal D^t(\lambda e_M^t+1)=\lambda e_M^t$ and $\lambda e_M^t=\dfrac{2-g(t)}{g(t)}$, we obtain
\begin{equation}\label{eq:Eulfuneq}
g(t)g(2t) = 2\bigl(g(2t)+ \mathcal{D}^t(g(t))\bigr).
\end{equation}
Now,
\[
g(t)g(2t) = \sum_{n=0}^{\infty}\left(\sum_{k=0}^{n}\binom{n}{k}_M2^ke_ke_{n-k}\right)\frac{t^n}{M_n!},
\]
whereas
\begin{align*}
2\bigl(g(2t)+ \mathcal{D}^t(g(t))\bigr) &= 2\left(\sum_{n=0}^{\infty}2^ne_n\frac{t^n}{M_n!} + \sum_{n=1}^{\infty}M_ne_{n-1}\frac{t^n}{M_n!}\right)\\
&= \sum_{n=0}^{\infty}2\left(2^ne_n+M_{n+1}e_{n-1}\right)\frac{t^n}{M_n!}.
\end{align*}
Hence, by \eqref{eq:Eulfuneq} we prove the result.
\end{proof}

In the next result we address a closed form for weighted integrals of these Apostol-type polynomials.

\begin{theorem}\label{thm:int-bernoulli-recurrence}
For $m,n\ge 1$ and any admissible $\lambda$ and $\alpha$, define
\[
I_{m,n}^{(\alpha)}:=\int_0^1 x^n\,\B_{m,M}^{(\alpha)}(x;\lambda)\,d_M x.
\]
Then
\begin{equation}\label{eq:bernoulli-recurrence}
\begin{aligned}
I_{m,n}^{(\alpha)} = &\Bigl(\frac{1}{\lambda}-1\Bigr)\frac{\B_{m+1,M}^{(\alpha)}(\lambda)}{M_{m+1}}
      +\frac{1}{\lambda}\,\B_{m,M}^{(\alpha-1)}(\lambda) \\
     &- \frac{M_n}{M_{m+1}}\sum_{k=0}^{m}\binom{m+1}{k}_M\,
        \B_{k,M}^{(\alpha)}(\lambda)\,\frac{2^{\,m+1-k}}{M_{n+m-k+1}}.
\end{aligned}
\end{equation}
\end{theorem}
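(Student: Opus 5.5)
The plan is to apply the first integration by parts formula from the preliminaries with $f(x)=x^n$ and an $M$-antiderivative $g$ of $\B_{m,M}^{(\alpha)}(x;\lambda)$. By Theorem~\ref{thm:m-derivative} (with $n$ replaced by $m+1$), the natural choice is
\[
g(x)=\frac{\B_{m+1,M}^{(\alpha)}(x;\lambda)}{M_{m+1}},
\]
since then $\mathcal{D}^x g(x)=\B_{m,M}^{(\alpha)}(x;\lambda)$. Because $n\ge 1$, we have $f(0)=0$, $f(1)=1$ and $\mathcal{D}^x f(x)=M_nx^{n-1}$. The parts formula therefore gives
\[
I_{m,n}^{(\alpha)}=\frac{\B_{m+1,M}^{(\alpha)}(1;\lambda)}{M_{m+1}}-\frac{M_n}{M_{m+1}}\int_0^1 x^{n-1}\,\B_{m+1,M}^{(\alpha)}(2x;\lambda)\,d_Mx .
\]

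For the boundary term, note that $1=0+_M1$, since $(0+_M1)^k=1$ for every $k$. Applying \eqref{eq:apostol-bm-difference} at $x=0$ with $n$ replaced by $m+1$ gives
\[
\lambda\B_{m+1,M}^{(\alpha)}(1;\lambda)=\B_{m+1,M}^{(\alpha)}(\lambda)+M_{m+1}\B_{m,M}^{(\alpha-1)}(\lambda).
\]
Hence the boundary term equals $\frac{1}{\lambda}\frac{\B_{m+1,M}^{(\alpha)}(\lambda)}{M_{m+1}}+\frac{1}{\lambda}\B_{m,M}^{(\alpha-1)}(\lambda)$.

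For the remaining integral, expand with Theorem~\ref{thm:explicit}:
\[
\B_{m+1,M}^{(\alpha)}(2x;\lambda)=\sum_{k=0}^{m+1}\binom{m+1}{k}_M\B_{k,M}^{(\alpha)}(\lambda)\,2^{m+1-k}x^{m+1-k}.
\]
Multiplying by $x^{n-1}$ and using linearity together with $\int_0^1x^{j}\,d_Mx=1/M_{j+1}$, the monomial $x^{n+m-k}$ contributes $1/M_{n+m-k+1}$.

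The one point needing care is the top term $k=m+1$. It contributes $\B_{m+1,M}^{(\alpha)}(\lambda)/M_n$, which after multiplying by $-M_n/M_{m+1}$ becomes $-\B_{m+1,M}^{(\alpha)}(\lambda)/M_{m+1}$. I would split this term off. Combined with the boundary term, it produces exactly the coefficient $\bigl(\frac{1}{\lambda}-1\bigr)\frac{\B_{m+1,M}^{(\alpha)}(\lambda)}{M_{m+1}}$ in \eqref{eq:bernoulli-recurrence}. The terms $k=0,\dots,m$ give the stated sum.

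The main thing to watch is this bookkeeping, together with the admissibility conditions: $\lambda\neq0$ is needed to divide, and $n\ge1$ is needed so that $f(0)=0$ and $x^{n-1}$ is a genuine monomial.
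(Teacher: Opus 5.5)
Your proposal is correct and follows essentially the same route as the paper. Both arguments integrate by parts with $f=x^n$ and the $M$-antiderivative $\B_{m+1,M}^{(\alpha)}(x;\lambda)/M_{m+1}$, expand $\B_{m+1,M}^{(\alpha)}(2x;\lambda)$ via the explicit formula, split off the $k=m+1$ term, and use the difference equation at $x=0$ for the boundary value $\B_{m+1,M}^{(\alpha)}(1;\lambda)$. The differences are cosmetic: you apply the difference equation before splitting off the top term, and you state the identification $1=0+_M1$ explicitly, which the paper leaves implicit.
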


\begin{proof}
Using Theorem \ref{thm:m-derivative} we can write
\[
I_{m,n}^{(\alpha)}=\frac{1}{M_{m+1}}\int_0^1 x^n\,\mathcal{D}^x\B_{m+1,M}^{(\alpha)}(x;\lambda)\,d_M x.
\]

By applying the integration‑by‑parts formula of the $M$-integral we have
\[
\int_0^1 x^n\,\mathcal{D}^x\B_{m+1,M}^{(\alpha)}(x;\lambda)\,d_M x
   = \Bigl[x^n\B_{m+1,M}^{(\alpha)}(x;\lambda)\Bigr]_0^1
     - \int_0^1 \mathcal{D}^x(x^n)\,\B_{m+1,M}^{(\alpha)}(2x;\lambda)\,d_M x.
\]

For $n\ge 1$, $\mathcal{D}^x(x^n)=M_n x^{n-1}$ and the boundary term at $x=0$ vanishes. Thus
\begin{equation} \label{eq:bm_int}
I_{m,n}^{(\alpha)}=\frac{1}{M_{m+1}}\Bigl(
   \B_{m+1,M}^{(\alpha)}(1;\lambda)
   -M_n\int_0^1 x^{n-1}\B_{m+1,M}^{(\alpha)}(2x;\lambda)\,d_M x\Bigr).
\end{equation}

Since
\[
\B_{m+1,M}^{(\alpha)}(2x;\lambda)=\sum_{k=0}^{m+1}\binom{m+1}{k}_M\,
   \B_{k,M}^{(\alpha)}(\lambda)\,2^{\,m+1-k}x^{m+1-k},
\]
we obtain
\begin{align*}
\int_0^1 x^{n-1}\B_{m+1,M}^{(\alpha)}(2x;\lambda)\,d_M x
   &=\sum_{k=0}^{m+1}\binom{m+1}{k}_M\,
     \B_{k,M}^{(\alpha)}(\lambda)\,\frac{2^{\,m+1-k}}{M_{n+m-k+1}}\\
   &=\sum_{k=0}^{m}\binom{m+1}{k}_M\,
   \B_{k,M}^{(\alpha)}(\lambda)\,\frac{2^{\,m+1-k}}{M_{n+m-k+1}} + \frac{\B_{m+1,M}^{(\alpha)}(\lambda)}{M_n}.
\end{align*}
Substituting this into \eqref{eq:bm_int} yields
\begin{align}\label{eq:bm_int1}
I_{m,n}^{(\alpha)} &=\frac{1}{M_{m+1}}\Bigl(
   \B_{m+1,M}^{(\alpha)}(1;\lambda)
   -M_n\Bigl(\sum_{k=0}^{m}\binom{m+1}{k}_M\,
      \B_{k,M}^{(\alpha)}(\lambda)\frac{2^{\,m+1-k}}{M_{n+m-k+1}}
      +\frac{\B_{m+1,M}^{(\alpha)}(\lambda)}{M_n}\Bigr)\Bigr)\notag\\
&=\frac{1}{M_{m+1}}\Bigl(
   \B_{m+1,M}^{(\alpha)}(1;\lambda)-\B_{m+1,M}^{(\alpha)}(\lambda)
   -M_n\sum_{k=0}^{m}\binom{m+1}{k}_M\,
      \B_{k,M}^{(\alpha)}(\lambda)\frac{2^{\,m+1-k}}{M_{n+m-k+1}}\Bigr).
\end{align}
By Theorem \ref{thm:difference} we have
\[
\frac{1}{M_{m+1}}\bigl(\B_{m+1,M}^{(\alpha)}(1;\lambda)-\B_{m+1,M}^{(\alpha)}(\lambda)\bigr)
   =\Bigl(\frac{1}{\lambda}-1\Bigr)\frac{\B_{m+1,M}^{(\alpha)}(\lambda)}{M_{m+1}}
     +\frac{1}{\lambda}\B_{m,M}^{(\alpha-1)}(\lambda).
\]
Finally, substituting this into \eqref{eq:bm_int1} gives precisely \eqref{eq:bernoulli-recurrence}.  
\end{proof}

An analogous result holds for the Euler polynomials.
\begin{theorem}\label{thm:int-euler-recurrence}
For $m,n\ge 1$ and any admissible $\lambda$ and $\alpha$, define
\[
J_{m,n}^{(\alpha)}:=\int_0^1 x^n\,\E_{m,M}^{(\alpha)}(x;\lambda)\,d_M x.
\]
Then
\begin{equation}\label{eq:euler-recurrence}
\begin{aligned}
J_{m,n}^{(\alpha)} = &-\Bigl(\frac{1}{\lambda}+1\Bigr)\frac{\E_{m+1,M}^{(\alpha)}(\lambda)}{M_{m+1}}
      +\frac{2}{\lambda}\,\frac{\E_{m+1,M}^{(\alpha-1)}(\lambda)}{M_{m+1}} \\
     &\quad- \frac{M_n}{M_{m+1}}\sum_{k=0}^{m}\binom{m+1}{k}_M\,
        \E_{k,M}^{(\alpha)}(\lambda)\,\frac{2^{\,m+1-k}}{M_{n+m-k+1}}.
\end{aligned}
\end{equation}
\end{theorem}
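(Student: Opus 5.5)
The plan is to repeat the argument of Theorem \ref{thm:int-bernoulli-recurrence}, with the Euler difference equation \eqref{eq:apostol-em-difference} taking the place of \eqref{eq:apostol-bm-difference}.

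First, by \eqref{eq:apostol-em-derivative} with $n$ replaced by $m+1$, we have $\E_{m,M}^{(\alpha)}(x;\lambda)=\frac{1}{M_{m+1}}\mathcal{D}^x\E_{m+1,M}^{(\alpha)}(x;\lambda)$. Hence
\[
J_{m,n}^{(\alpha)}=\frac{1}{M_{m+1}}\int_0^1 x^n\,\mathcal{D}^x\E_{m+1,M}^{(\alpha)}(x;\lambda)\,d_Mx .
\]
We then apply the first integration-by-parts formula with $f(x)=x^n$ and $g=\E_{m+1,M}^{(\alpha)}$. Since $n\ge1$, the boundary term at $0$ vanishes and $\mathcal{D}^x(x^n)=M_nx^{n-1}$. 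This gives
\[
J_{m,n}^{(\alpha)}=\frac{1}{M_{m+1}}\Bigl(\E_{m+1,M}^{(\alpha)}(1;\lambda)-M_n\int_0^1x^{n-1}\E_{m+1,M}^{(\alpha)}(2x;\lambda)\,d_Mx\Bigr).
\]

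Next, we expand $\E_{m+1,M}^{(\alpha)}(2x;\lambda)$ using the explicit formula \eqref{eq:apostol-em-explicit} as
\[
\sum_{k=0}^{m+1}\binom{m+1}{k}_M\E_{k,M}^{(\alpha)}(\lambda)\,2^{m+1-k}x^{m+1-k}.
\]
We integrate term by term using $\int_0^1x^{j}\,d_Mx=1/M_{j+1}$. The term with $k=m+1$ contributes $\E_{m+1,M}^{(\alpha)}(\lambda)/M_n$; we split it off, and the remaining sum over $0\le k\le m$ is exactly the sum appearing in \eqref{eq:euler-recurrence}. After this step,
\[
J_{m,n}^{(\alpha)}=\frac{1}{M_{m+1}}\Bigl(\E_{m+1,M}^{(\alpha)}(1;\lambda)-\E_{m+1,M}^{(\alpha)}(\lambda)\Bigr)-\frac{M_n}{M_{m+1}}\sum_{k=0}^{m}\binom{m+1}{k}_M\E_{k,M}^{(\alpha)}(\lambda)\frac{2^{m+1-k}}{M_{n+m-k+1}} .
\]

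Finally, we set $x=0$ in \eqref{eq:apostol-em-difference} with index $m+1$. Since $0+_M1=1$ (all $M$-binomial terms with a zero power drop out), this reads
\[
\lambda\E_{m+1,M}^{(\alpha)}(1;\lambda)+\E_{m+1,M}^{(\alpha)}(\lambda)=2\E_{m+1,M}^{(\alpha-1)}(\lambda),
\]
so that
\[
\E_{m+1,M}^{(\alpha)}(1;\lambda)=\frac{2}{\lambda}\E_{m+1,M}^{(\alpha-1)}(\lambda)-\frac{1}{\lambda}\E_{m+1,M}^{(\alpha)}(\lambda).
\]
Substituting this, the bracket becomes
\[
\frac{2}{\lambda}\E_{m+1,M}^{(\alpha-1)}(\lambda)-\Bigl(\frac1\lambda+1\Bigr)\E_{m+1,M}^{(\alpha)}(\lambda).
\]
Dividing by $M_{m+1}$ yields the first two terms of \eqref{eq:euler-recurrence}.

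The only delicate point is bookkeeping. We must check that the split-off $k=m+1$ term, which is $\E_{m+1,M}^{(\alpha)}(\lambda)/M_n$ and so cancels against the factor $M_n$, combines with the boundary value at $x=1$ with the correct sign. We must also take $\lambda\neq0$, which is implicit in \emph{admissible}. Everything else is a direct transcription of the Bernoulli computation.
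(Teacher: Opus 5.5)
Your proposal is correct and follows the paper's proof step for step. You lower the index with the $M$-derivative, integrate by parts, expand $\E_{m+1,M}^{(\alpha)}(2x;\lambda)$ and split off the $k=m+1$ term, and then eliminate $\E_{m+1,M}^{(\alpha)}(1;\lambda)$ using the Euler difference equation at $x=0$. You also spell out two points the paper leaves implicit: $(0+_M1)^j=1$ for all $j$, and $\lambda\neq 0$.
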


\begin{proof}
Following the same pattern as for the Bernoulli case gives
\[
J_{m,n}^{(\alpha)}=\frac{1}{M_{m+1}}\Bigl(
   \E_{m+1,M}^{(\alpha)}(1;\lambda)
   -M_n\int_0^1 x^{n-1}\E_{m+1,M}^{(\alpha)}(2x;\lambda)\,d_M x\Bigr).
\]
Further, by expanding $\E_{m+1,M}^{(\alpha)}(2x;\lambda)$ we get
\begin{align*}
\int_0^1 x^{n-1}\E_{m+1,M}^{(\alpha)}(2x;\lambda)\,d_M x
   &= \sum_{k=0}^{m}\binom{m+1}{k}_M\,
      \E_{k,M}^{(\alpha)}(\lambda)\,\frac{2^{\,m+1-k}}{M_{n+m-k+1}}
     + \frac{\E_{m+1,M}^{(\alpha)}(\lambda)}{M_n}.
\end{align*}
Then
\begin{align*}
J_{m,n}^{(\alpha)} &=\frac{1}{M_{m+1}}\Bigl(
   \E_{m+1,M}^{(\alpha)}(1;\lambda)
   -M_n\Bigl(\sum_{k=0}^{m}\binom{m+1}{k}_M\,
      \E_{k,M}^{(\alpha)}(\lambda)\frac{2^{\,m+1-k}}{M_{n+m-k+1}}
      +\frac{\E_{m+1,M}^{(\alpha)}(\lambda)}{M_n}\Bigr)\Bigr)\\
&=\frac{1}{M_{m+1}}\Bigl(
   \E_{m+1,M}^{(\alpha)}(1;\lambda)-\E_{m+1,M}^{(\alpha)}(\lambda)
   -M_n\sum_{k=0}^{m}\binom{m+1}{k}_M\,
      \E_{k,M}^{(\alpha)}(\lambda)\frac{2^{\,m+1-k}}{M_{n+m-k+1}}\Bigr).
\end{align*}
Moreover, using the difference equation we have
\[
\frac{1}{M_{m+1}}\bigl(\E_{m+1,M}^{(\alpha)}(1;\lambda)-\E_{m+1,M}^{(\alpha)}(\lambda)\bigr)
   = -\Bigl(\frac{1}{\lambda}+1\Bigr)\frac{\E_{m+1,M}^{(\alpha)}(\lambda)}{M_{m+1}}
     +\frac{2}{\lambda}\,\frac{\E_{m+1,M}^{(\alpha-1)}(\lambda)}{M_{m+1}}.
\]
Substitution of this into the previous expression proves the theorem.  
\end{proof}
These formula can be used to compute moments and establish orthogonality-like relations.

\section{Conclusion}

In this paper, we have introduced and systematically studied the Apostol-type Mersenne-Bernoulli and Mersenne-Euler polynomials of order $\alpha$. Building on the $M$-calculus framework, we have established a comprehensive set of properties for these polynomials, including explicit representations, addition theorems, difference equations $M$-derivative formulas, $M$-integral formulas, and convolution identities. In future, the zeros and graphical representations of these polynomials could be studied computationally, which might reveal interesting patterns. One could also explore the connections between these polynomials and special functions, such as $M$-analogues of different zeta functions. Finally, the methodology developed here could be applied to other sequences, such as Pell numbers, Jacobsthal numbers, or Perrin numbers.

\section*{Conflict of interest}
The authors declare that they have no conflict of interest.

\end{document}